\tikzset{
    edge/.style={-{Latex[scale=1.7]}},
    dedge/.style={{Latex[scale=1.7]}-{Latex[scale=1.7]}},
}
\theoremstyle{plain}
\newtheorem{theorem}{Theorem}
\newtheorem{lemma}[theorem]{Lemma}
\theoremstyle{definition}
\theoremstyle{remark}
\def\@fnsymbol#1{\ensuremath{\ifcase#1\or $\ding{73}$\or *\or \dagger\or \ddagger\or
   \mathsection\or \mathparagraph\or \|\or **\or \dagger\dagger
   \or \ddagger\ddagger \else\@ctrerr\fi}}
\begin{document}
\phantom{.}
\vskip 6cm

\begin{center}
{\Large \bf Greedy trees have minimum Sombor indices\footnote{%
The first author is supported by Diffine LLC.
The second author is supported by the Serbian Ministry of Education, Science and Technological Development 
through the Mathematical Institute of SASA, and by the project F-159 of the Serbian Academy of Sciences and Arts.}
}

\bigskip\bigskip
{\bf Ivan Damnjanovi\'c$^{a,b}$\footnote{Corresponding author.}, Dragan Stevanovi\'c$^c$}

\bigskip
{\small \em
$^a$University of Ni\v s, Faculty of Electronic Engineering, Ni\v s, Serbia

$^b$Diffine LLC

$^c$Mathematical Institute, Serbian Academy of Sciences and Arts, Belgrade, Serbia
}

\bigskip
{\tt ivan.damnjanovic@elfak.ni.ac.rs, ivan@diffine.com, dragan\_stevanovic@mi.sanu.ac.rs}

\bigskip
(Received November \dots, 2022)
\end{center}

\begin{abstract}
Recently, Gutman [MATCH Commun.\ Math.\ Comput.\ Chem.\ \textbf{86} (2021) 11--16] defined a new graph invariant 
which is named the Sombor index $\mathrm{SO}(G)$ of a graph~$G$ and is computed via the expression
\[
    \mathrm{SO}(G) = \sum_{u \sim v} \sqrt{\deg(u)^2 + \deg(v)^2} ,
\]
where $\deg(u)$ represents the degree of the vertex $u$ in $G$ and 
the summing is performed across all the unordered pairs of adjacent vertices $u$ and $v$. 
Here we take into consideration the set of all the trees $\mathcal{T}_D$ that have a specified degree sequence~$D$ and 
show that the greedy tree attains the minimum Sombor index on the set $\mathcal{T}_D$.
\end{abstract}


\baselineskip=0.30in
\section{Introduction}

In this paper we will consider all graphs to be undirected, finite, simple and non-null. Thus, every graph will have at least one vertex and there shall be no loops or multiple edges. For convenience we will take that each graph of order $n$ has the vertex set $\{1, 2, 3, \ldots, n\}$.

Furthermore, for a given graph $G$ of order $n$ and any $u = \overline{1, n}$, we shall use the notation $\deg(u)$ to signify the degree of the vertex $u$, i.e.\ the total number of vertices adjacent to it. Taking this into consideration, it is possible to define the Sombor index $\mathrm{SO}(G)$ of the graph $G$ by using the expression
\[
    \mathrm{SO}(G) = \sum_{u \sim v} \sqrt{\deg(u)^2 + \deg(v)^2} ,
\]
where the summing is done across all the unordered pairs of adjacent vertices $u$ and $v$, as done so by Gutman \cite{Gutman}.
Although it was defined very recently, the Sombor index already managed to attract a lot of attention from researchers---%
see \cite{2,3,4,5,6,7,8,9,10,11,12,13,14,15,16,17,18,19,20,21} for a partial list of results on the Sombor index. 

For a given $n \in \mathbb{N}$, 
let $D = (d_1, d_2, \ldots, d_n)$ be an arbitrary non-increasing sequence of $n$ non-negative integers.
We shall use $\mathcal{T}_D$ to denote the set of all the trees of order $n$ such that $D$ represents their degree sequence.
For convenience, we shall take into consideration only the trees such that $d_u=\deg(u)$ for each $u = \overline{1, n}$. 
The reason why this can be done is clear---%
all the other trees that adhere to the degree sequence~$D$ are surely isomorphic to at least one aforementioned tree.

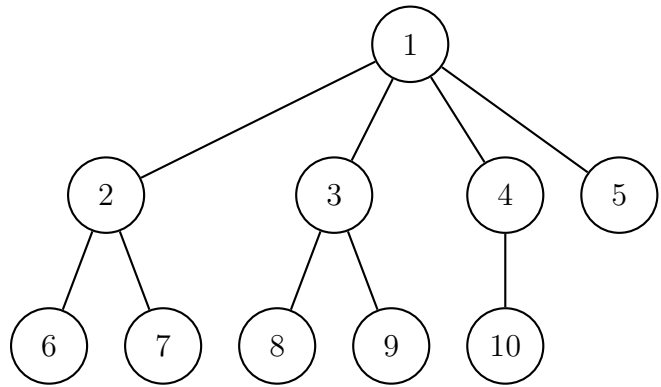
\begin{figure}[h]
    \centering
    \begin{tikzpicture}
        \node[state, minimum size=1cm, thick] (1) at (0, 0) {$6$};
        \node[state, minimum size=1cm, thick] (2) at (1.5, 0) {$7$};
        \node[state, minimum size=1cm, thick] (3) at (3.0, 0) {$8$};
        \node[state, minimum size=1cm, thick] (4) at (4.5, 0) {$9$};
        \node[state, minimum size=1cm, thick] (5) at (6.0, 0) {$10$};
        
        \node[state, minimum size=1cm, thick] (6) at (0.75, 2) {$2$};
        \node[state, minimum size=1cm, thick] (7) at (3.75, 2) {$3$};
        \node[state, minimum size=1cm, thick] (8) at (6.0, 2) {$4$};
        \node[state, minimum size=1cm, thick] (9) at (7.5, 2) {$5$};

        \node[state, minimum size=1cm, thick] (10) at (4.75, 4) {$1$};

        \path[thick] (1) edge (6);
        \path[thick] (2) edge (6);
        \path[thick] (3) edge (7);
        \path[thick] (4) edge (7);
        \path[thick] (5) edge (8);
        \path[thick] (6) edge (10);
        \path[thick] (7) edge (10);
        \path[thick] (8) edge (10);
        \path[thick] (9) edge (10);
    \end{tikzpicture}
    \caption{The greedy tree $GT_D$ for $D = (4, 3, 3, 2, 1, 1, 1, 1, 1, 1)$.}
    \label{greedy_example}
\end{figure}

We define the greedy tree $GT_D$ as the unique rooted tree from $\mathcal{T}_D$ 
such that its breadth-first traversal yields the sequence $(1, 2, 3, \ldots, n)$.
In other words,
the root~1 has $d_1$ children $2,3,\dots,d_1+1$,
its child~2 has $d_2-1$ children $d_1+2,\dots,d_1+d_2$, and so on.
An example of a greedy tree is given in Figure~\ref{greedy_example}
for the degree sequence $D = (4, 3, 3, 2, 1, 1, 1, 1, 1, 1)$.
The greedy tree~$GT_D$ often appears as an extremal tree in~$\mathcal{T}_D$:
for example, it minimizes 
Wiener index~\cite{22}, the incidence energy~\cite{22} and the sum of vertex eccentricities~\cite{23},
while it maximizes 
the connectivity and sum-connectivity indices~\cite{24},
the spectral moments~\cite{25},
the spectral radius of generalized reverse distance matrix~\cite{26},
the number of pairs of vertices whose distance is at most~$k$ for arbitrary~$k$ \cite{27}
and the number of subtrees of any given order~\cite{22,28}.
Our goal here is to prove the following theorem
which shows that $GT_D$ also minimizes the Sombor index in~$\mathcal{T}_D$.

\begin{theorem}\label{main_theorem}
For any $n\in\mathbb{N}$ and any non-increasing degree sequence $D\in\mathbb{N}_0^n$ such that $\mathcal{T}_D\neq\varnothing$,
the greedy tree~$GT_D$ attains the minimum Sombor index in~$\mathcal{T}_D$.
\end{theorem}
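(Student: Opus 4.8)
The plan is to isolate a single convexity property of the edge weight $f(x,y)=\sqrt{x^2+y^2}$ and then to drive an arbitrary tree toward $GT_D$ by degree-preserving edge switches, each justified by that property. The first and most important step is the following rearrangement inequality: for all reals $a\ge b\ge 1$ and $c\ge d\ge 1$,
\[
    f(a,c)+f(b,d)\ \le\ f(a,d)+f(b,c),
\]
with equality precisely when $a=b$ or $c=d$. To prove it I would fix $c\ge d$, set $g(x)=f(x,c)-f(x,d)$, and compute
\[
    g'(x)=\frac{x}{\sqrt{x^2+c^2}}-\frac{x}{\sqrt{x^2+d^2}}\le 0 ,
\]
so that $g$ is non-increasing; then $a\ge b$ gives $g(a)\le g(b)$, which is exactly the displayed inequality. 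Equivalently, $\partial^2 f/\partial x\,\partial y=-xy\,(x^2+y^2)^{-3/2}<0$, i.e.\ $f$ is strictly submodular on the positive quadrant. In words: across an edge, pairing a larger degree with a larger degree never costs more than pairing it with a smaller one.

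Since $\mathrm{SO}$ depends only on the multiset $\{(\deg u,\deg v):uv\in E\}$ of degree pairs, I would next exploit the degree-preserving $2$-switch: deleting two edges $x_1y_1$ and $x_2y_2$ and inserting $x_1y_2$ and $x_2y_1$ leaves every vertex degree, hence $D$, unchanged. If the switch is chosen so that the two deleted pairs are ``unsorted'' and the two inserted pairs are the corresponding ``sorted'' ones, the inequality above guarantees that $\mathrm{SO}$ does not increase. This is precisely the mechanism behind the known extremality of $GT_D$ for other adjacent-degree indices; for instance $GT_D$ maximizes the Randi\'c and sum-connectivity indices, whose weights $(xy)^{-1/2}$ and $(x+y)^{-1/2}$ have positive mixed partial (they are supermodular). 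Thus the direction of optimization is dictated by the sign of the mixed partial, and it is submodularity of $f$ that makes $GT_D$ a \emph{minimizer} of $\mathrm{SO}$.

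For the combinatorial reduction I would root every tree at a vertex of maximum degree and compare it with $GT_D$ through its breadth-first structure. The greedy tree is characterized by two features: reading the vertices in BFS order lists their degrees non-increasingly, and the parent map is monotone, so that the shallowest, earliest vertices both carry the largest degrees and host the largest-degree branches first. Any non-greedy tree violates one of these, and in each case I would locate two edges whose endpoints exhibit an unsorted degree pattern---a deeper vertex of larger degree than a shallower one, or a large-degree branch attached later than a small-degree branch---and apply the switch that sorts it. Each such move keeps the object a tree, fixes $D$, and does not increase $\mathrm{SO}$; iterating it should terminate at $GT_D$, yielding $\mathrm{SO}(T)\ge \mathrm{SO}(GT_D)$ for every $T\in\mathcal{T}_D$.

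I expect the genuine difficulty to lie entirely in this last step rather than in the inequality. Three points need care. First, one must exhibit, for \emph{every} non-greedy rooted tree, a single $2$-switch that simultaneously (i) preserves connectedness and acyclicity, (ii) replaces an unsorted degree pattern by its sorted counterpart, so that $\mathrm{SO}$ is non-increasing, and (iii) strictly increases an explicit ``greediness'' potential, for example a lexicographic record of the degrees along the BFS order. Second, because equality in the rearrangement inequality occurs whenever two of the four degrees coincide, many switches leave $\mathrm{SO}$ unchanged; it is the potential in (iii), and not $\mathrm{SO}$ itself, that must supply a well-founded descent and hence termination. Third, one has to verify that the \emph{only} tree admitting no further sorting switch is $GT_D$, so that the terminal state of the procedure is the greedy tree and the resulting chain of inequalities genuinely certifies its minimality.
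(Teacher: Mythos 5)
Your analytic ingredient is correct and is essentially the paper's switching lemma: the submodularity of $f(x,y)=\sqrt{x^2+y^2}$ (negative mixed partial) is exactly what makes the degree-preserving $2$-switch that ``sorts'' two edges non-increasing for $\mathrm{SO}$, and the direction of optimization (minimum rather than maximum, in contrast to the connectivity indices) is identified correctly. But the proposal stops short precisely where you say the difficulty lies, and that is a genuine gap rather than a routine verification. You never exhibit the required switch for an arbitrary non-greedy tree (in the paper this takes a case analysis because the four vertices $u,v,w,t$ can coincide or be adjacent in inconvenient ways --- e.g.\ when the deeper high-degree vertex is a child or grandchild of the shallower low-degree one, the naive choice of edges violates the hypotheses $u\nsim w$, $v\nsim t$ and a different pair of edges must be selected); you never define the ``greediness'' potential beyond naming it, nor prove that it strictly increases under some admissible $\mathrm{SO}$-non-increasing switch at every non-greedy tree; and you never prove that the only switch-free tree is $GT_D$. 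Because the rearrangement inequality degenerates to equality whenever two of the four degrees coincide --- which happens constantly in a degree sequence with repeated entries --- all three of these points are load-bearing, and the termination argument in particular is delicate: a lexicographic BFS record is not even canonically defined for a tree with tied degrees unless you fix a tie-breaking rule, which is tantamount to the construction you are omitting.

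It is worth noting how the paper dissolves exactly these issues. It perturbs the degrees into scores $\mathrm{scr}(u)=\deg(u)-u\,q_D$ for a sufficiently small $q_D>0$, so that all $n$ scores are distinct and strictly ordered; the resulting pseudo-Sombor index then satisfies a \emph{strict} version of your switching inequality with no equality cases. This removes the need for any potential function or termination argument: since $\mathcal{T}_D$ is finite, a pseudo-Sombor minimizer exists, and the switching lemmas show that any tree other than $GT_D$ admits a strictly improving switch, so the minimizer must be $GT_D$. A separate quantitative lemma (choosing $q_D$ small relative to the gap between the two smallest Sombor values in $\mathcal{T}_D$) transfers minimality of the perturbed index back to $\mathrm{SO}$ itself --- a step your plan does not need in this form but whose role (handling the equality cases) your potential function would have to play instead. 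If you want to complete your version, you must either carry out the potential-function bookkeeping in full or adopt some equivalent tie-breaking device; as written, the combinatorial half of the proof is a plan, not a proof.
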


The remainder of the paper shall be structured as follows. 
In Section \ref{pseudo} we will define two auxiliary graph-related concepts: 
the vertex score and the pseudo-Sombor index, and prove some of their basic properties. 
Section \ref{main_section} will combine these two concepts with edge switching 
in order to complete the proof of Theorem~\ref{main_theorem}.

\section{Vertex score and pseudo-Sombor index}\label{pseudo}

In this and the following section, 
we assume that $D = (d_1, d_2, \ldots, d_n)$ is 
a fixed non-increasing sequence of non-negative integers for some $n \in \mathbb{N}, \, n \ge 2$,
such that $\mathcal{T}_D\neq\varnothing$.
Recall that we assume $\deg(u)=d_u$ for each vertex $u\in\{1,\dots,n\}$ of each tree in~$\mathcal{T}_D$.
Given the fact that the set $\mathcal{T}_D$ is finite, it is clear that the set
\[
    Z_D = \{ \mathrm{SO}(T) \colon T \in \mathcal{T}_D \}
\]
must also be non-empty and finite. Let $z_D^{(1)} = \min Z_D$ be its smallest element. Now, we have two possibilities: either the set $Z_D$ contains only the element $z_D^{(1)}$, or it has at least two elements, in which case we will denote its second smallest element via $z_D^{(2)} = \min \left( Z_D \setminus \left\{ z_D^{(1)} \right\} \right)$. By taking this into consideration, we are able to define a sufficiently small constant $q_D > 0$ via the expression
\[
    q_D = \begin{cases}
        \dfrac{1}{2n},& |Z_D| = 1,\\
        \min \left\{ \dfrac{1}{2n}, \dfrac{z_D^{(2)} - z_D^{(1)}}{4n^3 \sqrt{2}} \right\},& |Z_D| \ge 2.
    \end{cases}
\]

Furthermore, we will rely on $q_D$ in order to define the vertex score $\mathrm{scr}(u)$ for each vertex $u$ in the following manner:
\begin{equation}\label{score_def}
    \mathrm{scr}(u) = \deg(u) - u \, q_D.
\end{equation}
We can imagine the vertex score as a property very similar to the degree, albeit slightly smaller. 
Unlike the degrees, the vertex scores satisfy the strict monotonicity property that we shall heavily rely on afterwards. 
This conclusion is disclosed in the following lemma.

\begin{lemma}\label{monotonicity}
For each tree in $\mathcal{T}_D$ we have
    \[
        \mathrm{scr}(1) > \cdots > \mathrm{scr}(n) > 0 .
    \]
\end{lemma}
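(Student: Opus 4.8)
The plan is to split the statement into its two constituent assertions and verify each directly: the strict descending chain $\mathrm{scr}(1) > \cdots > \mathrm{scr}(n)$, and the positivity $\mathrm{scr}(n) > 0$. Since by our standing convention $\deg(u) = d_u$ holds for every tree in $\mathcal{T}_D$, the scores $\mathrm{scr}(u) = d_u - u\,q_D$ and the constant $q_D$ depend only on the fixed sequence $D$ and not on the particular tree chosen. Hence it suffices to establish the chain once, and the phrase ``for each tree'' imposes no extra work.

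For the monotonicity part I would compute the consecutive difference directly. For any $u \in \{1, \ldots, n-1\}$ one has
\[
    \mathrm{scr}(u) - \mathrm{scr}(u+1) = (d_u - u\,q_D) - \bigl(d_{u+1} - (u+1)\,q_D\bigr) = (d_u - d_{u+1}) + q_D .
\]
Because $D$ is non-increasing, $d_u - d_{u+1} \ge 0$, and since $q_D > 0$ by construction, this difference is at least $q_D$ and thus strictly positive. Applying this to every consecutive pair yields the full descending chain $\mathrm{scr}(1) > \cdots > \mathrm{scr}(n)$. This step needs nothing beyond the positivity of $q_D$ and the ordering of $D$.

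For the positivity of the terminal score, which by the chain is the smallest one, I would invoke the hypotheses $\mathcal{T}_D \neq \varnothing$ and $n \ge 2$: a tree on at least two vertices is connected and therefore has no isolated vertex, forcing the minimum degree to satisfy $d_n \ge 1$. Combining this with the bound $q_D \le \frac{1}{2n}$ that appears in both cases of the definition of $q_D$ gives $n\,q_D \le \frac{1}{2}$, so that
\[
    \mathrm{scr}(n) = d_n - n\,q_D \ge 1 - \tfrac{1}{2} = \tfrac{1}{2} > 0 .
\]
I expect the positivity step to be the only delicate point, since it is the sole place where the actual magnitude of $q_D$ matters rather than merely its sign: one must confirm that $q_D$ was chosen small enough that the accumulated subtraction $n\,q_D$ cannot drive the least degree below zero. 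The bound $q_D \le 1/(2n)$ together with the structural fact $d_n \ge 1$ supplies exactly the slack needed, completing the argument.
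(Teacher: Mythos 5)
Your proof is correct and follows essentially the same route as the paper's: both establish the descending chain from the non-increasing order of $D$ together with the positivity of $q_D$ (the paper compares arbitrary $u<v$, you compare consecutive indices, which is equivalent), and both derive $\mathrm{scr}(n)\ge 1/2>0$ from $d_n\ge 1$ and $q_D\le \tfrac{1}{2n}$.
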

\begin{proof}
    Let $u$ and $v$ be two distinct vertices of the given tree such that $u < v$. Directly from the definition of the set $\mathcal{T}_D$, we obtain that $\deg(u) \ge \deg(v)$. By virtue of Eq.\ (\ref{score_def}), we see that
    \[
        \mathrm{scr}(u) - \mathrm{scr}(v) = (\deg(u) - \deg(v)) + (v-u) \, q_D .
    \]
    Given the fact that $v - u > 0$ and $q_D > 0$, it immediately follows that
    \[
        \mathrm{scr}(u) > \mathrm{scr}(v) .
    \]
    
    In order to finalize the proof, it is sufficient to show that $\mathrm{scr}(n) > 0$. However, since $n \ge 2$, it is clear that $\deg(n) \ge 1$, as well as $q_D \le \dfrac{1}{2n}$, which further implies
    \[
        \mathrm{scr}(n) = \deg(n) - n \, q_D \ge 1 - n \cdot \frac{1}{2n} = \frac{1}{2} > 0 ,
    \]
    as desired.
\end{proof}

In fact, the whole point of using vertex scores instead of their degrees is to avoid having the same value correspond to two different vertices. By relying on a vertex score instead of its degree, we define the auxiliary pseudo-Sombor index $\mathrm{pSO}(T)$ of an arbitrary tree $T \in \mathcal{T}_D$ as follows:
\[
    \mathrm{pSO}(T) = \sum_{u \sim v} \sqrt{\mathrm{scr}(u)^2 + \mathrm{scr}(v)^2} .
\]
Due to the fact that the positive constant $q_D$ is chosen to be fairly small, it makes sense that the difference between the pseudo-Sombor index and the Sombor index is also relatively small. 
We demonstrate this fact in the following lemma.
\begin{lemma}\label{approx_lemma}
If $| Z_D | \ge 2$, then for any tree $T \in \mathcal{T}_D$ we have
    \[
        \mathrm{SO}(T) - \frac{z_D^{(2)} - z_D^{(1)}}{2} < \mathrm{pSO}(T) < \mathrm{SO}(T) .
    \]
\end{lemma}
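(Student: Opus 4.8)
The plan is to establish the two strict inequalities separately through an edge-by-edge comparison of the Sombor and pseudo-Sombor sums. For the upper bound $\mathrm{pSO}(T) < \mathrm{SO}(T)$, I would observe that the definition $\mathrm{scr}(u) = \deg(u) - u\,q_D$ together with $q_D > 0$ gives $\mathrm{scr}(u) < \deg(u)$, while Lemma~\ref{monotonicity} guarantees $\mathrm{scr}(u) > 0$. Hence $0 < \mathrm{scr}(u) < \deg(u)$ for every vertex, so $\mathrm{scr}(u)^2 < \deg(u)^2$, and therefore
\[
    \sqrt{\mathrm{scr}(u)^2 + \mathrm{scr}(v)^2} < \sqrt{\deg(u)^2 + \deg(v)^2}
\]
for each edge $uv$. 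Summing over all $n-1 \ge 1$ edges yields the strict inequality on the right.

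For the lower bound, the key is to control the per-edge discrepancy. I would interpret $\sqrt{x^2+y^2}$ as the Euclidean norm $\lVert(x,y)\rVert$ and invoke the reverse triangle inequality. Writing $\deg(u) - \mathrm{scr}(u) = u\,q_D$ and $\deg(v) - \mathrm{scr}(v) = v\,q_D$, this gives
\[
    \sqrt{\deg(u)^2 + \deg(v)^2} - \sqrt{\mathrm{scr}(u)^2 + \mathrm{scr}(v)^2} \le \sqrt{(u\,q_D)^2 + (v\,q_D)^2} = q_D\sqrt{u^2+v^2}.
\]
Since $u, v \le n$, each such term is bounded above by $q_D\,n\sqrt{2}$.

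Summing over the $n-1$ edges of the tree then produces
\[
    \mathrm{SO}(T) - \mathrm{pSO}(T) \le (n-1)\,q_D\,n\sqrt{2} < n^2\sqrt{2}\,q_D.
\]
Finally I would substitute the bound $q_D \le \dfrac{z_D^{(2)} - z_D^{(1)}}{4n^3\sqrt{2}}$ available in the case $|Z_D| \ge 2$, which collapses the right-hand side to $\dfrac{z_D^{(2)} - z_D^{(1)}}{4n} \le \dfrac{z_D^{(2)} - z_D^{(1)}}{2}$, and the strictness of the middle inequality secures the desired strict lower bound.

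The only genuine subtlety is selecting the right tool for the per-edge estimate: the reverse triangle inequality for the Euclidean norm handles it cleanly and uniformly, sidestepping any casework on which of the degrees or scores is larger. Everything else is bookkeeping, and the constant $q_D$ was evidently engineered precisely so that the factor $n^2\sqrt{2}$ arising from ``$(n-1)$ edges times $n\sqrt{2}$ per edge'' cancels against the denominator $4n^3\sqrt{2}$ with room to spare.
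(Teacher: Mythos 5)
Your proof is correct, and while the upper bound $\mathrm{pSO}(T) < \mathrm{SO}(T)$ is argued exactly as in the paper, your lower bound takes a genuinely different route. The paper rationalizes each per-edge difference as
\[
\sqrt{A}-\sqrt{B}=\frac{A-B}{\sqrt{A}+\sqrt{B}},
\]
bounds the numerator by factoring $\deg(u)^2-\mathrm{scr}(u)^2=(\deg(u)-\mathrm{scr}(u))(\deg(u)+\mathrm{scr}(u))$ with $\deg(u)-\mathrm{scr}(u)=u\,q_D$ and $\deg(u)+\mathrm{scr}(u)<2n$, and bounds the denominator below by $\sqrt{2}$ using $\deg \ge 1$; this yields a per-edge bound of $\frac{z_D^{(2)}-z_D^{(1)}}{2n}$ and hence the stated estimate after summing over $n-1$ edges. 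You instead apply the reverse triangle inequality for the Euclidean norm to the vectors $(\deg(u),\deg(v))$ and $(\mathrm{scr}(u),\mathrm{scr}(v))$, whose difference is $(u\,q_D, v\,q_D)$, getting the per-edge bound $q_D\sqrt{u^2+v^2}\le q_D\,n\sqrt{2}$ in one line with no need for the auxiliary estimates on $\deg+\mathrm{scr}$ or on the sum of the two square roots. Your route is shorter and in fact lands on the sharper bound $\frac{z_D^{(2)}-z_D^{(1)}}{4n}$, with strictness supplied by $(n-1)<n$ and $q_D>0$; the paper's route is pure algebra on square roots and avoids invoking the norm inequality. Both fit comfortably within the slack engineered into $q_D$, and your closing remark about why the constant $4n^3\sqrt{2}$ was chosen is accurate.
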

\begin{proof}
    Given the fact that all the vertex scores are positive and smaller than the corresponding degrees, the inequality $\mathrm{pSO}(T) < \mathrm{SO}(T)$ is obvious. Hence, we only need to prove that
    \[
        \mathrm{SO}(T) - \mathrm{pSO}(T) < \frac{z_D^{(2)} - z_D^{(1)}}{2} .
    \]
    
    To start, it is easy to see that for any vertex $u = \overline{1, n}$, we necessarily have
    \[
        \deg(u) - \mathrm{scr}(u) = u \, q_D \le n \, \dfrac{z_D^{(2)} - z_D^{(1)}}{4n^3 \sqrt{2}} \le \dfrac{z_D^{(2)} - z_D^{(1)}}{4n^2 \sqrt{2}} ,
    \]
    as well as
    \[
        \deg(u) + \mathrm{scr}(u) \le 2 \deg (u) < 2n,
    \]
    which immediately gives
    \begin{equation}\label{aux_1}
        \deg(u)^2 - \mathrm{scr}(u)^2 = (\deg(u) - \mathrm{scr}(u))(\deg(u) + \mathrm{scr}(u)) < \frac{z_D^{(2)} - z_D^{(1)}}{2n \sqrt{2}} .
    \end{equation}
    Furthermore, for any two vertices $u$ and $v$, we quickly obtain
    \begin{equation}\label{aux_2}
        \sqrt{\deg(u)^2 + \deg(v)^2} + \sqrt{\mathrm{scr}(u)^2 + \mathrm{scr}(v)^2} > \sqrt{\deg(u)^2 + \deg(v)^2} \ge \sqrt{2} ,
    \end{equation}
    given the fact that no vertex degree can be below one.
    
    Now, for any two adjacent vertices $u \sim v$, we conclude that
    \begin{align*}
        \sqrt{\deg(u)^2 + \deg(v)^2} &- \sqrt{\mathrm{scr}(u)^2 + \mathrm{scr}(v)^2} =\\
        &= \frac{(\deg(u)^2 + \deg(v)^2) - (\mathrm{scr}(u)^2 + \mathrm{scr}(v)^2)}{\sqrt{\deg(u)^2 + \deg(v)^2} + \sqrt{\mathrm{scr}(u)^2 + \mathrm{scr}(v)^2}}\\
        &= \frac{(\deg(u)^2 - \mathrm{scr}(u)^2) + (\deg(v)^2 - \mathrm{scr}(v)^2)}{\sqrt{\deg(u)^2 + \deg(v)^2} + \sqrt{\mathrm{scr}(u)^2 + \mathrm{scr}(v)^2}} .
    \end{align*}
    By implementing both Eq.\ (\ref{aux_1}) and Eq.\ (\ref{aux_2}), we promptly reach
    \begin{align*}
        \sqrt{\deg(u)^2 + \deg(v)^2} - \sqrt{\mathrm{scr}(u)^2 + \mathrm{scr}(v)^2} < \frac{\dfrac{z_D^{(2)} - z_D^{(1)}}{2n \sqrt{2}} + \dfrac{z_D^{(2)} - z_D^{(1)}}{2n \sqrt{2}}}{\sqrt{2}} = \dfrac{z_D^{(2)} - z_D^{(1)}}{2n} .
    \end{align*}
    Finally, we get
    \begin{align*}
        \mathrm{SO}(T) - \mathrm{pSO}(T) &= \sum_{u \sim v} \left( \sqrt{\deg(u)^2 + \deg(v)^2} - \sqrt{\mathrm{scr}(u)^2 + \mathrm{scr}(v)^2} \right)\\
        &< \sum_{u \sim v} \frac{z_D^{(2)} - z_D^{(1)}}{2n} = \frac{z_D^{(2)} - z_D^{(1)}}{2n} \, (n-1) < \frac{z_D^{(2)} - z_D^{(1)}}{2} ,
    \end{align*}
    thus completing the proof.
\end{proof}

The approximation obtained in Lemma \ref{approx_lemma} can now be used to show a key property of the pseudo-Sombor index that plays a central role in the proof of Theorem~\ref{main_theorem}. This property in given in the next lemma.
\begin{lemma}\label{cool_lemma}
If a tree~$T$ has the minimum pseudo-Sombor index in~$\mathcal{T}_D$, 
then $T$ also has the minimum Sombor index in~$\mathcal{T}_D$.
\end{lemma}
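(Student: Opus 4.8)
The plan is to exploit the two-sided estimate from Lemma~\ref{approx_lemma}, which says that $\mathrm{pSO}$ tracks $\mathrm{SO}$ to within an additive error strictly below $\frac{z_D^{(2)} - z_D^{(1)}}{2}$; since this error is less than half the spacing between the two smallest attainable Sombor values, it cannot bridge the gap between an $\mathrm{SO}$-minimizer and an $\mathrm{SO}$-non-minimizer. First I would dispose of the degenerate case $|Z_D| = 1$: here every tree in $\mathcal{T}_D$ shares the same Sombor index, so the minimum is attained by all of them, and in particular by $T$, making the conclusion immediate.

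For the main case $|Z_D| \ge 2$ I would argue by contradiction. Suppose $T$ minimizes $\mathrm{pSO}$ over $\mathcal{T}_D$ but fails to minimize $\mathrm{SO}$; then $\mathrm{SO}(T)$ is not the smallest element of $Z_D$, so $\mathrm{SO}(T) \in Z_D \setminus \{z_D^{(1)}\}$ and hence $\mathrm{SO}(T) \ge z_D^{(2)}$. Fix any tree $T_0 \in \mathcal{T}_D$ with $\mathrm{SO}(T_0) = z_D^{(1)}$. Applying the left inequality of Lemma~\ref{approx_lemma} to $T$ and the right inequality to $T_0$ would yield
\[
    \mathrm{pSO}(T) > \mathrm{SO}(T) - \frac{z_D^{(2)} - z_D^{(1)}}{2} \ge z_D^{(2)} - \frac{z_D^{(2)} - z_D^{(1)}}{2} = \frac{z_D^{(1)} + z_D^{(2)}}{2}
\]
together with $\mathrm{pSO}(T_0) < \mathrm{SO}(T_0) = z_D^{(1)}$. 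Since $z_D^{(1)} < z_D^{(2)}$ forces $\frac{z_D^{(1)} + z_D^{(2)}}{2} > z_D^{(1)}$, these two bounds would combine to give $\mathrm{pSO}(T_0) < z_D^{(1)} < \mathrm{pSO}(T)$, contradicting the assumed minimality of $\mathrm{pSO}(T)$ and thereby establishing the claim.

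The only delicate point is that the additive slack furnished by Lemma~\ref{approx_lemma} is exactly half the gap $z_D^{(2)} - z_D^{(1)}$, which is precisely what keeps the $\mathrm{pSO}$-value of every $\mathrm{SO}$-minimizer strictly below the midpoint $\frac{z_D^{(1)} + z_D^{(2)}}{2}$ while pushing the $\mathrm{pSO}$-value of every non-minimizer strictly above it; this clean separation is the whole reason the constant $q_D$ was tuned so small in the first place. Once the separation is in hand, the argument reduces to chaining the displayed inequalities, so I anticipate no further obstacle beyond correctly invoking the two directions of Lemma~\ref{approx_lemma}.
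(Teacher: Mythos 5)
Your proposal is correct and follows essentially the same route as the paper: both arguments combine the two one-sided bounds of Lemma~\ref{approx_lemma} to separate the $\mathrm{pSO}$-values of $\mathrm{SO}$-minimizers and non-minimizers across the midpoint $\frac{z_D^{(1)}+z_D^{(2)}}{2}$. The paper phrases this directly (deducing $\mathrm{SO}(T) < z_D^{(2)}$ and hence $\mathrm{SO}(T)=z_D^{(1)}$) while you phrase it as a contradiction, but the underlying chain of inequalities is identical.
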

\begin{proof}
    First of all, in case we have $|Z_D| = 1$, it can immediately be seen that all the trees in $\mathcal{T}_D$ must have the same Sombor index, hence any tree attains the minimum Sombor index value. In the remainder of the proof, we will suppose that $|Z_D| \ge 2$.

    Let $T$ be a tree that attains the minimum pseudo-Sombor index in~$\mathcal{T}_D$, and 
let $T'$ be a tree that attains the minimum Sombor index in~$\mathcal{T}_D$. 
From Lemma~\ref{approx_lemma}, we know that $\mathrm{pSO}(T') < \mathrm{SO}(T') = z_D^{(1)}$, 
which further implies that $\mathrm{pSO}(T)\leq\mathrm{pSO}(T')<z_D^{(1)}$.
From Lemma~\ref{approx_lemma} we also have 
$\mathrm{SO}(T) - \dfrac{z_D^{(2)} - z_D^{(1)}}{2} < \mathrm{pSO}(T)$, 
which means that 
$$
\mathrm{SO}(T) < z_D^{(1)} + \dfrac{z_D^{(2)} - z_D^{(1)}}{2} = \dfrac{z_D^{(2)} + z_D^{(1)}}{2} < z_D^{(2)}.
$$ 
Since $z_D^{(1)}$ is the only possible value of Sombor index from~$Z_D$ that is smaller than $z_D^{(2)}$, 
we obtain that $\mathrm{SO}(T) = z_D^{(1)}$, 
meaning that $T$ has the minimum Sombor index in~$\mathcal{T}_D$.
\end{proof}

\section{Greedy trees}\label{main_section}

As a direct consequence of Lemma \ref{cool_lemma}, 
we see that in order to demonstrate that $GT_D$ attains the minimum value of Sombor index, 
it is sufficient to prove that $GT_D$ attains the minimum value of the pseudo-Sombor index. 
In this section we do that by showing that 
for any tree $T\in\mathcal{T}_D$ with $T\not\cong GT_D$
there is another tree $T'\in\mathcal{T}_D$ such that $pSO(T')<pSO(T)$.

For convenience, we will assume that all of the trees from~$\mathcal{T}_D$ are rooted, 
with the root fixed at the vertex~$1$. 
In this case, the root must have the highest score and no two vertices can have the same score, 
as shown in Lemma~\ref{monotonicity}. 
The {\em level} of a vertex will denote its distance to the root~$1$.
Bearing this in mind, we now disclose three helpful lemmas that together show that, apart from~$GT_D$, 
no other tree can attain the minimum value of the pseudo-Sombor index in~$\mathcal{T}_D$.

\begin{lemma}\label{switching}
Let $T \in \mathcal{T}_D$ be a tree containing four distinct vertices $u, v, w, t$ such that
    \begin{align*}
        u \sim v, && w \sim t, && u \nsim w, && v \nsim t .
    \end{align*}
    Suppose that the graph obtained from $T$ by deleting the edges $\{u, v\}$ and $\{w, t\}$ and adding the edges $\{u, w\}$ and $\{ v, t \}$ is a tree, and denote it by $T_1$. In that case we have $T_1 \in \mathcal{T}_D$, as well as
    \[
        \mathrm{pSO}(T) > \mathrm{pSO}(T_1) \quad \iff \quad (\mathrm{scr}(u) - \mathrm{scr}(t))(\mathrm{scr}(w) - \mathrm{scr}(v)) > 0 .
    \]
\end{lemma}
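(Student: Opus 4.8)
The plan is to separate the statement into its two assertions and dispatch the membership claim first. To see that $T_1 \in \mathcal{T}_D$, I would observe that the prescribed edge switching is degree-preserving: each of the four vertices $u, v, w, t$ loses exactly one incident edge and gains exactly one, while every other vertex retains all of its edges. Hence $\deg(x)$ is the same in $T_1$ as in $T$ for every vertex $x$, so $T_1$ has degree sequence $D$; together with the hypothesis that $T_1$ is a tree, this gives $T_1 \in \mathcal{T}_D$. The important byproduct is that, since both the labels and the degrees of all vertices are unchanged, the vertex scores $\mathrm{scr}(\cdot)$ from Eq.~(\ref{score_def}) are identical in $T$ and $T_1$.

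Next I would compute $\mathrm{pSO}(T) - \mathrm{pSO}(T_1)$ directly from the definition. Since $\mathrm{pSO}$ is a sum over edges of a quantity depending only on the (unchanged) scores, and since $T$ and $T_1$ share every edge except the four involved in the switch, all common edges cancel and only the switched edges survive. Writing $a = \mathrm{scr}(u)$, $b = \mathrm{scr}(v)$, $c = \mathrm{scr}(w)$, $e = \mathrm{scr}(t)$, this reduces the entire comparison to
\[
  \mathrm{pSO}(T) - \mathrm{pSO}(T_1) = \left(\sqrt{a^2+b^2} + \sqrt{c^2+e^2}\right) - \left(\sqrt{a^2+c^2} + \sqrt{b^2+e^2}\right),
\]
so the lemma becomes a purely analytic claim about the four \emph{positive} numbers $a, b, c, e$, their positivity supplied by Lemma~\ref{monotonicity}.

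The key manipulation is to regroup this difference by isolating the two terms containing $a$ from the two containing $e$, which rewrites it as $\psi(a) - \psi(e)$ for the single-variable function $\psi(s) = \sqrt{s^2+b^2} - \sqrt{s^2+c^2}$. I would then read off the sign of $\psi(a) - \psi(e)$ by rationalizing: applying $\sqrt{a^2+k^2} - \sqrt{e^2+k^2} = (a^2-e^2)/(\sqrt{a^2+k^2}+\sqrt{e^2+k^2})$ for $k \in \{b,c\}$, the difference factors as $(a^2 - e^2)$ times a difference of two reciprocal sums. The factor $a^2 - e^2 = (a-e)(a+e)$ carries the sign of $a - e$ because $a + e > 0$, while the reciprocal-sum bracket carries the sign of $c - b$, since enlarging $k$ enlarges both denominators. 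Multiplying these signs shows that $\psi(a) - \psi(e)$ has the same sign as $(a-e)(c-b)$, which is exactly the desired biconditional.

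The one place needing care is the sign bookkeeping in the reciprocal-sum bracket against the sign of $a^2 - e^2$, and the observation that the degenerate case cannot occur: because $v \neq w$ the scores $b$ and $c$ are distinct by Lemma~\ref{monotonicity}, and because all scores are strictly positive every inequality in the chain is strict, so no equality or boundary case spoils the equivalence. I expect this to be the only delicate step; the rest is routine algebra once the comparison has been localized to the four switched edges.
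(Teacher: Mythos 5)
Your proposal is correct and follows essentially the same route as the paper: degree preservation gives $T_1\in\mathcal{T}_D$, the comparison localizes to the four switched edges, and the four-term square-root inequality is reduced to the sign of a product of two score differences. The only divergence is the middle algebraic step --- the paper squares the inequality and expands to reach $(\mathrm{scr}(u)^2-\mathrm{scr}(t)^2)(\mathrm{scr}(w)^2-\mathrm{scr}(v)^2)>0$ and then uses positivity of the scores, whereas you rationalize and track signs via a one-variable regrouping; both are routine and arrive at the same factorization, so no gap remains.
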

\begin{proof}
    First of all, given the fact that $T_1$ is guaranteed to be a tree and $T$ and $T_1$ obviously have the same degree sequence, it is clear that $T_1 \in \mathcal{T}_D$. Furthermore, pseudo-Sombor indices of these two trees will have the same summands, except for the terms that correspond to the deleted and newly added edges. With this in mind, we quickly get
    \begin{align*}
        \mathrm{pSO}(T) - \mathrm{pSO}(T_1) &= \sqrt{\mathrm{scr}(u)^2 + \mathrm{scr}(v)^2} + \sqrt{\mathrm{scr}(w)^2 + \mathrm{scr}(t)^2}\\
        &- \sqrt{\mathrm{scr}(u)^2 + \mathrm{scr}(w)^2} - \sqrt{\mathrm{scr}(v)^2 + \mathrm{scr}(t)^2} .
    \end{align*}
    It follows that $\mathrm{pSO}(T) > \mathrm{pSO}(T_1)$ is equivalent to
    \[
        \sqrt{\mathrm{scr}(u)^2 + \mathrm{scr}(v)^2} + \sqrt{\mathrm{scr}(w)^2 + \mathrm{scr}(t)^2} > \sqrt{\mathrm{scr}(u)^2 + \mathrm{scr}(w)^2} + \sqrt{\mathrm{scr}(v)^2 + \mathrm{scr}(t)^2} ,
    \]
    which, after squaring, becomes equivalent to
    \[
        (\mathrm{scr}(u)^2 + \mathrm{scr}(v)^2)(\mathrm{scr}(w)^2 + \mathrm{scr}(t)^2) > (\mathrm{scr}(u)^2 + \mathrm{scr}(w)^2)(\mathrm{scr}(v)^2 + \mathrm{scr}(t)^2) .
    \]
    Expanding the above expressions, we conclude that
    \begin{alignat*}{2}
        && \mathrm{pSO}(T) &> \mathrm{pSO}(T_1)\\
        \iff \quad && \mathrm{scr}(u)^2 \, \mathrm{scr}(w)^2 + \mathrm{scr}(v)^2 \, \mathrm{scr}(t)^2 &> \mathrm{scr}(u)^2 \, \mathrm{scr}(v)^2 + \mathrm{scr}(w)^2 \, \mathrm{scr}(t)^2\\
        \iff \quad && (\mathrm{scr}(u)^2 - \mathrm{scr}(t)^2)(\mathrm{scr}(w)^2 - \mathrm{scr}(v)^2) &> 0 .
    \end{alignat*}
    Since all vertex scores are positive, it is trivial to see that
    \[
        (\mathrm{scr}(u)^2 - \mathrm{scr}(t)^2)(\mathrm{scr}(w)^2 - \mathrm{scr}(v)^2) > 0
    \]
    is further equivalent to
    \[
        (\mathrm{scr}(u) - \mathrm{scr}(t))(\mathrm{scr}(w) - \mathrm{scr}(v)) > 0.
\vspace{-33.3pt}        
    \]
\end{proof}

We can now use the switching mechanism from Lemma~\ref{switching} 
to construct a tree with a smaller pseudo-Sombor index whenever we are given a tree different from~$GT_D$. 
The necessary constructions are given in the following two lemmas.

\begin{lemma}\label{main_lemma_1}
If a tree $T \in \mathcal{T}_D$ contains vertices $\alpha$ and~$\beta$ 
such that $\alpha$ is at a greater level than $\beta$, but $\mathrm{scr}(\alpha) > \mathrm{scr}(\beta)$,
then $T$ cannot attain the minimum pseudo-Sombor index in~$\mathcal{T}_D$.
\end{lemma}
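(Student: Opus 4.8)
The plan is to prove the contrapositive: assuming $T$ has the minimum pseudo-Sombor index, I would exhibit a tree $T'\in\mathcal{T}_D$ with $\mathrm{pSO}(T')<\mathrm{pSO}(T)$ produced by a single application of Lemma~\ref{switching}, a contradiction. First I would record that $\beta\neq 1$, since by Lemma~\ref{monotonicity} the root carries the strictly largest score, so every vertex other than the root has a parent; in particular $\alpha$ and $\beta$ both have parents. The key structural dichotomy I would use is whether $T$ contains a \emph{local violation}, meaning a vertex whose score exceeds that of its parent.

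Case~1: some vertex has a larger score than its parent. Among all such vertices pick $\alpha^\ast$ with $\mathrm{scr}(\alpha^\ast)$ maximal. Maximality forces every child $c$ of $\alpha^\ast$ to satisfy $\mathrm{scr}(c)<\mathrm{scr}(\alpha^\ast)$, for otherwise $c$ would be a local violation of even larger score; and a short degree estimate (a leaf has score below $1$, whereas a parent of degree at least $2$ has score above $1$) shows $\alpha^\ast$ is not a leaf, so such a $c$ exists. Walking up the ancestor chain $a_0=\alpha^\ast, a_1=\text{parent}, a_2,\dots$ to the root, the scores sit below $\mathrm{scr}(\alpha^\ast)$ at $a_1$ but exceed it at the root, so there is a first ancestor $a_j$ with $\mathrm{scr}(a_j)>\mathrm{scr}(\alpha^\ast)$, necessarily $j\geq 2$, while $\mathrm{scr}(a_{j-1})<\mathrm{scr}(\alpha^\ast)$. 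I would then invoke Lemma~\ref{switching} with $u=a_j$, $v=a_{j-1}$, $w=\alpha^\ast$, $t=c$: deleting $\{a_j,a_{j-1}\}$ and $\{\alpha^\ast,c\}$ and adding $\{a_j,\alpha^\ast\}$ and $\{a_{j-1},c\}$ returns a tree, because $a_{j-1}$ heads a subtree containing $\alpha^\ast$ while $c$ heads a subtree of $\alpha^\ast$. Both factors $\mathrm{scr}(a_j)-\mathrm{scr}(c)$ and $\mathrm{scr}(\alpha^\ast)-\mathrm{scr}(a_{j-1})$ are positive, so the pseudo-Sombor index strictly drops.

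Case~2: every vertex scores strictly below its parent, i.e.\ scores strictly decrease along every root-to-leaf path. To fix the sign of the switch I would choose the violating pair extremally: let $\beta$ be a vertex of minimum level that is dominated by some strictly deeper vertex of higher score, and let $\alpha$ be such a deeper vertex. Under the Case~2 hypothesis $\beta$ cannot be an ancestor of $\alpha$ (the path from $\beta$ down to $\alpha$ would have to increase somewhere), and $\alpha$ is not an ancestor of $\beta$ since it is deeper, so their subtrees are disjoint. Writing $b$ for the parent of $\beta$, minimality of the level of $\beta$ means $b$ is not dominated, hence $\mathrm{scr}(b)$ exceeds the score of every strictly deeper vertex, in particular $\mathrm{scr}(b)>\mathrm{scr}(\text{parent of }\alpha)$. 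Applying Lemma~\ref{switching} with $u=\text{parent of }\alpha$, $v=\alpha$, $w=\beta$, $t=b$ then yields a tree, and the required sign condition $(\mathrm{scr}(u)-\mathrm{scr}(t))(\mathrm{scr}(w)-\mathrm{scr}(v))>0$ holds because both factors are negative, so again the pseudo-Sombor index strictly decreases.

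The main obstacle, and the reason for the split, is that the obvious move of swapping the parent edges of $\alpha$ and $\beta$ to lift the higher-scored vertex upward fails to return a tree precisely when one vertex is an ancestor of the other, and even when it does return a tree its sign is not automatically favourable. Case~1 handles the nested situation with a switch that crosses the ancestor whose score first overtakes $\mathrm{scr}(\alpha^\ast)$, and the extremal choice of $\beta$ is exactly what pins down the sign of the non-nested swap in Case~2. The remaining work is routine but essential: checking in each case that the four chosen vertices are distinct and satisfy the non-adjacency hypotheses of Lemma~\ref{switching}, and doing the three-component bookkeeping that confirms the result of each edge switch is indeed a tree.
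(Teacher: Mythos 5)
Your proof is correct, but it is organized quite differently from the paper's. The paper makes one extremal choice --- the smallest level $j$ containing a vertex $\beta$ that is out-scored by some deeper vertex $\alpha$ --- so that the parent $\gamma$ of $\beta$, sitting on level $j-1$, automatically out-scores \emph{every} vertex on a deeper level; it then always performs the same kind of switch (lifting $\alpha$ to become a child of $\gamma$) and splits into cases only to locate a valid fourth vertex when $\alpha$ is a descendant of $\beta$ or of $\beta$'s neighbour, recruiting a child of $\alpha$ whose existence follows from the degree-sequence ordering ($\mathrm{scr}(\alpha)>\mathrm{scr}(\beta)$ forces $\alpha<\beta$, hence $\deg(\alpha)\geq\deg(\beta)\geq 2$). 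You instead dichotomize on whether $T$ has a \emph{local} violation (a child out-scoring its parent). Your Case~1 is a genuinely different switch --- crossing the first ancestor $a_j$ whose score overtakes the maximal local violator $\alpha^\ast$ --- and you certify that $\alpha^\ast$ has a child via the numeric bound $\mathrm{scr}(\text{leaf})<1<\mathrm{scr}(\text{internal vertex})$, which is valid because $q_D\leq\frac{1}{2n}$, rather than via the ordering of $D$; the one line worth adding there is that the parent of $\alpha^\ast$ cannot be the root (the root has the strictly largest score by Lemma~\ref{monotonicity}), which is what guarantees that this parent has degree at least $2$. Your Case~2 is essentially the paper's generic subcase, with the sign of the switch pinned down by taking $\beta$ of minimum level so that its parent $b$ out-scores the parent of $\alpha$, and the no-local-violation hypothesis rules out the nested configuration entirely. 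Both arguments are complete: the paper buys a uniform switch at the price of a three-way adjacency case split, while you buy a clean non-nested case at the price of a separate ancestor-chain argument; I verified the distinctness, non-adjacency, connectivity and sign conditions in both of your cases and they all hold.
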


\begin{proof}
Let $j$ be the minimum index such that
each vertex on level~$k$, for each $0\leq k\leq j-1$, has a higher score 
than any vertex belonging to a level greater than~$k$,
but such that there exists a vertex~$\beta$ on level~$j$ and a vertex~$\alpha$ on a level greater than~$j$
with $\mathrm{scr}(\beta) < \mathrm{scr}(\alpha)$.
Since the root~$1$ has the highest score, we have that $j\geq 1$,
so that $\beta$ has a parent, which we denote by~$\gamma$.
In order to make the proof more concise, we will divide it into two cases depending on whether $\beta$ is the parent of~$\alpha$.

    \bigskip\noindent
    \emph{Case $\beta$ is the parent of $\alpha$}.\quad
    In this case, we clearly have that $\deg(\beta) \ge 2$. 
    Since $\mathrm{scr}(\alpha) > \mathrm{scr}(\beta)$, 
    Lemma \ref{monotonicity} tells us that $\alpha < \beta$, 
    hence $\deg(\alpha) \ge 2$ as well. 
    (Recall that the degrees are ordered in a non-increasing order in~$D$.)
    This means that the vertex $\alpha$ must have at least one child, which we shall denote via $\delta$. We now get that
    \begin{align*}
        \alpha \sim \delta, && \beta \sim \gamma, && \gamma \nsim \alpha, && \beta \nsim \delta .
    \end{align*}
    If we construct a graph $T_1$ from $T$ by deleting the edges $\{ \alpha, \delta \}$ and $\{ \beta, \gamma \}$ and adding the new edges $\{\gamma, \alpha\}$ and $\{ \beta, \delta \}$, we see that this graph must be a tree from the set $\mathcal{T}_D$. According to Lemma \ref{switching}, we obtain
    \[
        \mathrm{pSO}(T) > \mathrm{pSO}(T_1) \quad \iff \quad (\mathrm{scr}(\alpha) - \mathrm{scr}(\beta))(\mathrm{scr}(\gamma) - \mathrm{scr}(\delta)) > 0 .
    \]
    We have $\mathrm{scr}(\alpha) > \mathrm{scr}(\beta)$ by assumption, 
    while $\mathrm{scr}(\gamma) > \mathrm{scr}(\delta)$ also holds 
    since $\gamma$ is from level~$j-1$ and $\delta$ is from a greater level than~$\gamma$.
    Thus, $\mathrm{pSO}(T_1) < \mathrm{pSO}(T)$, so $T$ cannot attain the minimum pseudo-Sombor index in~$\mathcal{T}_D$.

    \bigskip\noindent
    \emph{Case $\beta$ is not the parent of $\alpha$}.\quad
    In this case, let $\delta$ be the parent of $\alpha$ that is located on some level greater than $j-1$. Here, it is clear that the vertices $\alpha, \beta, \gamma, \delta$ are all mutually distinct. Moreover, we have that $\alpha \sim \delta$ and $\beta \sim \gamma$, but $\gamma \nsim \alpha$. However, $\delta$ and $\beta$ may or may not be adjacent. It is easy to see that these two vertices are adjacent if and only if $\beta$ is the parent of $\delta$. These two scenarios shall yield two different construction patterns for $T_1$. For this reason, we shall divide the given case into two further subcases.

    \medskip\noindent
    \emph{Subase $\beta$ is not the parent of $\delta$}.\quad
    In this subcase, we get
    \begin{align*}
        \alpha \sim \delta, && \beta \sim \gamma, && \gamma \nsim \alpha, && \beta \nsim \delta .
    \end{align*}
    As in the previous case, if we construct a graph $T_1$ from $T$ by deleting the edges $\{ \alpha, \delta \}$ and $\{ \beta, \gamma \}$ and adding the new edges $\{\gamma, \alpha\}$ and $\{ \beta, \delta \}$, it can be easily seen that this graph must be a tree from the set $\mathcal{T}_D$. By virtue of Lemma \ref{switching}, we have
    \[
        \mathrm{pSO}(T) > \mathrm{pSO}(T_1) \quad \iff \quad (\mathrm{scr}(\alpha) - \mathrm{scr}(\beta))(\mathrm{scr}(\gamma) - \mathrm{scr}(\delta)) > 0 .
    \]
    As in the previous case, we have that $\mathrm{scr}(\alpha) > \mathrm{scr}(\beta)$, and $\mathrm{scr}(\gamma) > \mathrm{scr}(\delta)$ must also be true since $\gamma$ is from level $j-1$ and $\delta$ is from a greater level. 
    Thus, $\mathrm{pSO}(T_1) < \mathrm{pSO}(T)$.

    \medskip\noindent
    \emph{Subcase $\beta$ is the parent of $\delta$}.\quad
    In this subcase, 
    we have that $\gamma$ is the parent of $\beta$, which is the parent of $\delta$, which is the parent of $\alpha$. 
    Since $\deg(\beta)\geq 2$ and $\alpha<\beta$, it follows that $\deg(\alpha) \ge 2$, as already observed. 
    Thus, $\alpha$ must have at least one child, and we will name one of them as~$\varepsilon$. 
    Now, we have
    \begin{align*}
        \alpha \sim \varepsilon, && \beta \sim \gamma, && \gamma \nsim \alpha, && \beta \nsim \varepsilon .
    \end{align*}
    If we construct a graph $T_1$ from $T$ by deleting the edges $\{ \alpha, \varepsilon \}$ and $\{ \beta, \gamma \}$ and adding the new edges $\{\gamma, \alpha \}$ and $\{ \beta, \varepsilon \}$, it can be quickly noticed that this graph must be a tree from the set $\mathcal{T}_D$. Furthermore, Lemma \ref{switching} gives us
    \[
        \mathrm{pSO}(T) > \mathrm{pSO}(T_1) \quad \iff \quad (\mathrm{scr}(\alpha) - \mathrm{scr}(\beta))(\mathrm{scr}(\gamma) - \mathrm{scr}(\varepsilon)) > 0 .
    \]
    Now, it is clear that $\mathrm{scr}(\gamma) > \mathrm{scr}(\varepsilon)$, 
    since $\gamma$ is from level $j-1$ and $\varepsilon$ is from a greater level, 
    which again implies that $\mathrm{pSO}(T_1) < \mathrm{pSO}(T)$.
\end{proof}

As a direct consequence of Lemma \ref{main_lemma_1}, 
we see that a tree $T \in \mathcal{T}_D$ with the minimum value of the pseudo-Sombor index satisfies the property that 
whenever a vertex $\alpha$ is at a greater level than a vertex $\beta$, then $\mathrm{scr}(\alpha)<\mathrm{scr}(\beta)$.
We will now show that the scores of vertices at the same level are aligned according to the scores of their parents.

\begin{lemma}\label{main_lemma_2}
If a tree $T \in \mathcal{T}_D$ contains 
two vertices $\alpha$ and $\beta$ on the same level with $\mathrm{scr}(\alpha) > \mathrm{scr}(\beta)$, 
such that $\alpha$ has a child~$\gamma$ and $\beta$ has a child~$\delta$ with $\mathrm{scr}(\gamma) < \mathrm{scr}(\delta)$, 
then $T$ cannot attain the minimum pseudo-Sombor index in~$\mathcal{T}_D$.
\end{lemma}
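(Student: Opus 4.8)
The plan is to apply the edge-switching mechanism of Lemma~\ref{switching} directly to the four vertices $\alpha,\beta,\gamma,\delta$, choosing the deleted edges to be $\{\alpha,\gamma\}$ and $\{\beta,\delta\}$ and the added edges to be $\{\alpha,\delta\}$ and $\{\beta,\gamma\}$. Intuitively, this swap moves the higher-scored child~$\delta$ under the higher-scored parent~$\alpha$ and the lower-scored child~$\gamma$ under the lower-scored parent~$\beta$, which ought to decrease the pseudo-Sombor index. First I would verify that this switch is admissible, i.e.\ that the four vertices are distinct and that the resulting graph~$T_1$ is again a tree in $\mathcal{T}_D$.

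The distinctness of $\alpha,\beta,\gamma,\delta$ follows quickly: $\alpha$ and $\beta$ are distinct by hypothesis, $\gamma\neq\delta$ since $\mathrm{scr}(\gamma)<\mathrm{scr}(\delta)$, and a child cannot coincide with a same-level vertex or with a different parent's child. To match the hypothesis of Lemma~\ref{switching} I would set $u=\alpha$, $v=\gamma$, $w=\beta$, $t=\delta$, so that $u\sim v$ and $w\sim t$ hold because $\gamma,\delta$ are children of $\alpha,\beta$ respectively. The non-adjacencies $u\nsim w$ and $v\nsim t$ also hold: $\alpha\nsim\beta$ since two vertices on the same level cannot be adjacent in a rooted tree, and $\gamma\nsim\delta$ since they lie on the same level as well.

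The main point is then to check that $T_1$ is genuinely a tree rather than a disconnected or cyclic graph. This is the step I expect to require the most care. Deleting $\{\alpha,\gamma\}$ and $\{\beta,\delta\}$ detaches the subtrees rooted at $\gamma$ and at $\delta$; reattaching them via $\{\beta,\gamma\}$ and $\{\alpha,\delta\}$ simply reconnects each detached subtree to a vertex on the parents' common level. Since $\alpha$ and $\beta$ remain in the same component as the root after the deletions (their path to the root does not use the deleted edges, because those edges go \emph{downward} from $\alpha$ and $\beta$), and since neither reattachment creates a cycle, the resulting graph on $n$ vertices with $n-1$ edges is connected and hence a tree; Lemma~\ref{switching} then guarantees $T_1\in\mathcal{T}_D$.

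Finally I would invoke the equivalence in Lemma~\ref{switching}, which states that $\mathrm{pSO}(T)>\mathrm{pSO}(T_1)$ is equivalent to $(\mathrm{scr}(u)-\mathrm{scr}(t))(\mathrm{scr}(w)-\mathrm{scr}(v))>0$, i.e.\ to $(\mathrm{scr}(\alpha)-\mathrm{scr}(\delta))(\mathrm{scr}(\beta)-\mathrm{scr}(\gamma))>0$. Both factors are positive: from Lemma~\ref{main_lemma_1} (applied to the minimum-$\mathrm{pSO}$ tree, which we may assume $T$ to be) the child $\delta$ at a greater level than $\alpha$ satisfies $\mathrm{scr}(\alpha)>\mathrm{scr}(\delta)$, and likewise $\mathrm{scr}(\beta)>\mathrm{scr}(\gamma)$. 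Hence the product is strictly positive and $\mathrm{pSO}(T_1)<\mathrm{pSO}(T)$, so $T$ cannot attain the minimum pseudo-Sombor index in~$\mathcal{T}_D$, as claimed.
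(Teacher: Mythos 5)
Your overall strategy---the single edge switch deleting $\{\alpha,\gamma\},\{\beta,\delta\}$ and adding $\{\alpha,\delta\},\{\beta,\gamma\}$---is exactly the paper's, and your check that the resulting graph is a tree in $\mathcal{T}_D$ is fine. However, your application of Lemma~\ref{switching} contains a concrete labelling error. With your assignment $u=\alpha$, $v=\gamma$, $w=\beta$, $t=\delta$, the switch prescribed by Lemma~\ref{switching} adds the edges $\{u,w\}=\{\alpha,\beta\}$ and $\{v,t\}=\{\gamma,\delta\}$, not $\{\alpha,\delta\}$ and $\{\beta,\gamma\}$; that graph is not even a tree (adding $\{\alpha,\beta\}$ creates a cycle through their common ancestor, while the two detached subtrees hanging from $\gamma$ and $\delta$ form a separate component). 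Consequently the criterion you extract, $(\mathrm{scr}(\alpha)-\mathrm{scr}(\delta))(\mathrm{scr}(\beta)-\mathrm{scr}(\gamma))>0$, does not govern the switch you actually perform, and the conclusion $\mathrm{pSO}(T_1)<\mathrm{pSO}(T)$ does not follow as written. The same mismatch shows up in the non-adjacencies you verify: for the intended switch Lemma~\ref{switching} requires $\alpha\nsim\delta$ and $\beta\nsim\gamma$ (both true, since the unique parent of $\delta$ is $\beta$ and that of $\gamma$ is $\alpha$, and their children lie two levels below $\alpha,\beta$), rather than $\alpha\nsim\beta$ and $\gamma\nsim\delta$.

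The repair is immediate: take $u=\alpha$, $v=\gamma$, $w=\delta$, $t=\beta$. Then deleting $\{u,v\},\{w,t\}$ and adding $\{u,w\},\{v,t\}$ is precisely your intended switch, and the criterion of Lemma~\ref{switching} reads $(\mathrm{scr}(\alpha)-\mathrm{scr}(\beta))(\mathrm{scr}(\delta)-\mathrm{scr}(\gamma))>0$, which is immediate from the hypotheses $\mathrm{scr}(\alpha)>\mathrm{scr}(\beta)$ and $\mathrm{scr}(\delta)>\mathrm{scr}(\gamma)$. This is exactly the paper's proof. In particular, your detour through Lemma~\ref{main_lemma_1} (assuming $T$ minimal so that scores decrease with level) becomes unnecessary; its appearance was a symptom of having derived the wrong criterion, since the correct one uses precisely the two inequalities the statement of the lemma hands you.
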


\begin{proof}
    It is clear that
    \begin{align*}
        \alpha \sim \gamma, && \beta \sim \delta, && \alpha \nsim \delta, && \beta \nsim \gamma .
    \end{align*}
    If $T_1$ is obtained from $T$ 
    by deleting the edges $\{ \alpha, \gamma \}$ and $\{ \beta, \delta \}$ and 
    adding the new edges $\{\alpha, \delta \}$ and $\{ \beta, \gamma \}$, 
    then it is easy to see that $T_1$ is a tree from $\mathcal{T}_D$ as well. 
    From Lemma \ref{switching}
    \[
        \mathrm{pSO}(T) > \mathrm{pSO}(T_1) \quad \iff \quad (\mathrm{scr}(\alpha) - \mathrm{scr}(\beta))(\mathrm{scr}(\delta) - \mathrm{scr}(\gamma)) > 0,
    \]
    which by the above assumptions implies that $\mathrm{pSO}(T_1) < \mathrm{pSO}(T)$.
    Hence $T$ does not attain the minimum pseudo-Sombor index in~$\mathcal{T}_D$.
\end{proof}

Taking into consideration both Lemma \ref{main_lemma_1} and Lemma \ref{main_lemma_2}, 
we see that the only way for a tree $T \in \mathcal{T}_D$ to attain the minimum pseudo-Sombor index is 
if the children of the root have the highest possible scores, 
then the children of the highest-scored child have the highest possible scores, etc. 
In other words, if we select the children in such a way that the ones with the higher scores go first, 
the tree~$T$ must be such that its breadth-first traversal yields a strictly decreasing sequence of scores. 
Recalling that $\mathrm{scr}(1)>\dots>\mathrm{scr}(n)$ by Lemma~\ref{monotonicity}, 
we see that such tree is actually the greedy tree~$GT_D$,
and this is the only tree that attains the minimum pseudo-Sombor index in~$\mathcal{T}_D$. 
We can now complete the proof of Theorem \ref{main_theorem}.

\bigskip\noindent
\emph{Proof of Theorem \ref{main_theorem}}.\quad
If $n = 1$, then $D=(0)$ is
the only non-increasing sequence of non-negative integers in $\mathbb{N}_0^1$ for which $\mathcal{T}_D \neq \varnothing$.
In such case we actually have $\mathcal{T}_D=\{K_1\}$.
Since $K_1$ is also a greedy tree,
the minimum Sombor index is clearly attained by the greedy tree in this case.

For $n\geq 2$, suppose that $D \in \mathbb{N}_0^n$ is 
an arbitrarily chosen non-increasing sequence of non-negative integers such that $\mathcal{T}_D\neq\varnothing$. 
Now, Lemmas \ref{main_lemma_1} and~\ref{main_lemma_2} guarantee that 
the greedy tree $GT_D$ attains the minimum pseudo-Sombor index in~$\mathcal{T}_D$.
Lemma~\ref{cool_lemma} then dictates that $GT_D$ must also attain the minimum Sombor index in~$\mathcal{T}_D$, 
which completes the proof.
\hfill\qed

\end{document}